\def\C{\mathbb{C}}
\def\Q{\mathbb{Q}}
\DeclareMathOperator{\charac}{char}
\newcommand{\qf}[1]{{\langle{#1}\rangle}} 
\newcommand{\pf}[1]{{\langle\!\langle{#1}\rangle\!\rangle}} 
\newtheorem*{thmA}{Theorem A}
\newtheorem*{ques}{Question}
\newtheorem*{thmB}{Theorem B}
\newtheorem{prop}{Proposition}
\theoremstyle{definition}
\newtheorem{notation}[prop]{Notation}
\title{Linkage of Pfister forms over $\C(x_1,\ldots,x_n)$}
\author{Adam Chapman}
\address{Department of Computer Science, Tel-Hai Academic College,
  Upper Galilee, 12208 Israel} 
\email{adam1chapman@yahoo.com}
\author{Jean-Pierre Tignol}
\address{ICTEAM Institute, 
UCLouvain, Box L4.05.01,
B-1348 Louvain-la-Neuve, Belgium}
\email{jean-pierre.tignol@uclouvain.be}
\thanks{The second author acknowledges support
    from the Fonds de la Recherche Scientifique--FNRS under grant
    n$^\circ$~J.0159.19.}
  \date{\today}
\keywords{Quadratic Forms; Linkage; Rational Function Fields} 
\subjclass[2010]{Primary 11E81; Secondary 11E04, 19D45}
\begin{document}
\maketitle
\begin{abstract}
In this note, we prove the existence of a set of $n$-fold Pfister
forms of cardinality $2^n$ over $\C(x_1,\dots,x_n)$ which do
not share a common $(n-1)$-fold factor. This gives a negative answer
to a question raised by Becher. The main tools are the existence of
the dyadic valuation on the complex numbers and recent results on
symmetric bilinear over fields of characteristic 2.
\end{abstract}

The field $\C(x_1,x_2)$ of rational functions in two indeterminates
over the field of complex numbers is known to be a $C_2$-field in the
sense of Lang (see \cite[Section~97]{EKM}). It follows that every
quadratic form in five variables over $\C(x_1,x_2)$ is isotropic,
which implies that any two quaternion algebras over $\C(x_1,x_2)$
share a common maximal subfield, see
\cite[Th.~X.4.20]{Lam:2005}. Fields 
with this property are said to be \emph{linked}. It was noticed by
Becher in \cite{Becher} and by Chapman--Dolphin--Leep in
\cite[Cor.~5.3]{ChapmanDolphinLeep} that the following stronger
property holds: 
$\C(x_1,x_2)$ is \emph{$3$-linked} in the sense that any \emph{three}
quaternion algebras over $\C(x_1,x_2)$ share a common maximal
subfield. Comparison with the case of number fields, which are
$m$-linked for every integer $m$ by the local-global principle (see
\cite[Ex.~X.5.12A]{Lam:2005}), suggests to ask whether there exists an upper
bound on the integer $m$ for which $\C(x_1,x_2)$ is $m$-linked. We prove below:

\begin{thmA}
  The following quaternion algebras over $\C(x_1,x_2)$ do not share a
  common maximal subfield:
  \[
    (x_1,x_2),\qquad (x_1,x_2+1),\qquad (x_2,x_1+1),\qquad
    (x_2,x_1x_2+1).
  \]
\end{thmA}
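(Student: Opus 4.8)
The plan is to move the problem into characteristic $2$ by means of a valuation and then to finish with a short computation in a group of square classes. I would start from a valuation $v$ on $\C$ extending the $2$-adic valuation of $\Q$; since $\C$ is algebraically closed its value group is the divisible group $\Q$, and its residue field is $\overline{\mathbb{F}}_2$, of characteristic $2$. Extending $v$ to the Gauss valuation on $\C(x_1,x_2)$ that is trivial on the indeterminates, I obtain a valuation still with divisible value group and with residue field $k:=\overline{\mathbb{F}}_2(x_1,x_2)$, again of characteristic $2$. The five elements $x_1,x_2,x_1+1,x_2+1,x_1x_2+1$ are units for this valuation, and their residues are the like-named rational functions in $k$; moreover, because the value group is divisible, any element of $\C(x_1,x_2)^\times$ can be rescaled by a square to become a unit.

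Next I would reformulate the conclusion in terms of Pfister forms. A quadratic extension $\C(x_1,x_2)(\sqrt f)$ embeds into a quaternion algebra $(a,b)$ exactly when the $2$-fold Pfister form $\pf{a,b}$ admits $\pf{f}$ as a factor; so a common maximal subfield of the four algebras is the same as a common $1$-fold factor $\pf{f}$ of the four forms $\pf{x_1,x_2}$, $\pf{x_1,x_2+1}$, $\pf{x_2,x_1+1}$, $\pf{x_2,x_1x_2+1}$, where by the previous paragraph I may take $f$ to be a unit. Passing to the henselization (or completion) of $\C(x_1,x_2)$ at $v$ preserves such a common factor, and here the dyadic valuation together with the quoted results on symmetric bilinear forms in characteristic $2$ enter: the residue attached to $v$ sends these quadratic $2$-fold Pfister forms to the \emph{bilinear} $2$-fold Pfister forms $\pf{x_1,x_2}$, $\pf{x_1,x_2+1}$, $\pf{x_2,x_1+1}$, $\pf{x_2,x_1x_2+1}$ over $k$, and a common factor upstairs produces a common bilinear factor $\pf{\bar f}$ downstairs. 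I expect this transfer to be the main obstacle: one must control the behaviour of the slot $f$ and of the isometries under reduction to residue characteristic $2$, which is precisely what the results on bilinear forms are designed to supply.

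Granting the transfer, the problem becomes purely combinatorial. Over a field $k$ of characteristic $2$, an anisotropic bilinear $2$-fold Pfister form $\pf{a,b}$ is determined up to isometry by the subgroup $\langle a,b\rangle\,k^{\times2}$ of $k^\times/k^{\times2}$ that it generates, and $\pf{g}$ is a factor of it precisely when $g$ represents a nontrivial class of that subgroup. Hence a common $1$-fold factor of the four forms would amount to a nontrivial square class lying simultaneously in the four subgroups $\langle x_1,x_2\rangle$, $\langle x_1,x_2+1\rangle$, $\langle x_2,x_1+1\rangle$ and $\langle x_2,x_1x_2+1\rangle$ of $k^\times/k^{\times2}$.

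Finally I would carry out the computation. Taking the order of vanishing along the five prime divisors $x_1$, $x_2$, $x_1+1$, $x_2+1$ and $x_1x_2+1$ defines five homomorphisms $k^\times/k^{\times2}\to\mathbb{Z}/2$, and each of the five classes above has value $1$ at its own divisor and $0$ at the other four; thus the five classes are independent and the four subgroups sit inside a copy of $(\mathbb{Z}/2)^5$ on independent basis vectors $e_1=[x_1],\dots,e_5=[x_1x_2+1]$. There $\langle e_1,e_2\rangle\cap\langle e_1,e_4\rangle=\langle e_1\rangle$, and $\langle e_1\rangle\cap\langle e_2,e_3\rangle=1$, so already the first three subgroups meet trivially. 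No nontrivial common square class exists, the four bilinear Pfister forms share no common factor, and therefore---against the assumption---the four quaternion algebras share no common maximal subfield.
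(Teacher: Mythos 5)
Your overall strategy---push the four norm forms through a valuation on $\C$ extending the $2$-adic valuation of $\Q$ and its Gauss extension to $\C(x_1,x_2)$, then contradict a non-linkage statement over the residue field of characteristic $2$---is exactly the paper's. But both substantive steps are missing or wrong. First, the transfer: you assert that ``a common factor upstairs produces a common bilinear factor $\pf{\bar f}$ downstairs'' and defer the justification to ``the quoted results on symmetric bilinear forms in characteristic $2$''; those results do not do this---they supply the characteristic-$2$ non-linkage statement, not a specialization theorem. Residue maps for quadratic forms at a valuation with residue characteristic $2$ are precisely the delicate point, and the paper avoids any general specialization machinery: it notes that a common factor $\pf{\alpha}$ forces all the pure parts $\varphi'_{\bold d}$ to represent $-\alpha$, turns this into the homogeneous system $\varphi'_{\bold d}(\bold{t}_{\bold d},\bold{t}_{\bold d})=\varphi'_{\bold 0}(\bold{t}_{\bold 0},\bold{t}_{\bold 0})$, scales a solution so that the minimum value of its coordinates is $0$, takes residues, and uses anisotropy of the residue forms to produce a common represented element $\beta$, hence a common factor $\pf{\beta}$ by \cite[Lemma~6.11]{EKM}. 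That argument (or an equivalent one) is the heart of the proof and is absent from your write-up.

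Second, your characteristic-$2$ endgame rests on a false criterion. Over a field $k$ with $\charac k=2$, the same \cite[Lemma~6.11]{EKM} shows that $\pf{g}$ divides the anisotropic form $\pf{a,b}=\qf{1,a,b,ab}$ whenever $g$ is represented by the pure part $\qf{a,b,ab}$, i.e.\ whenever $g\in(ak^2+bk^2+abk^2)\setminus\{0\}$. This is a $k^2$-subspace with the origin removed, far larger than the union of square classes $ak^{\times2}\cup bk^{\times2}\cup abk^{\times2}$: for instance $a+b$ lies in it, so $\pf{a+b}$ divides $\pf{a,b}$ even though $a+b$ does not lie in the subgroup $\langle a,b\rangle k^{\times2}$ of $k^\times/k^{\times2}$. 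Your divisor-valuation computation in $(\mathbb{Z}/2)^5$ therefore only excludes common factors drawn from a short list of square classes and misses all the others; the genuine statement that the four residue forms share no $1$-fold factor is Proposition~\ref{prop:char2} (that is, \cite[Th.~3.3]{Chapman:2018}), a theorem about these $k^2$-subspaces that does not reduce to intersecting subgroups of the square class group. (The incidental claims that the residue field of $\C$ is $\overline{\mathbb{F}}_2$ and the value group is $\Q$ also depend on the choice of extension, but they are harmless: only residue characteristic $2$ is used.)
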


The arguments apply to a more general linkage question raised by
Becher~\cite{Becher}. Given a field $F$, the Witt ring $W F$ of (Witt classes of) symmetric bilinear forms over $F$ has a natural filtration by the powers of the maximal ideal $I F$ of even-dimensional forms:
$$W F \supset I F \supset I^2 F \supset \dots $$
Each $I^n F$ is generated by (bilinear) $n$-fold Pfister forms, i.e.,
forms of the shape
\[
  \pf{\alpha_1,\ldots,\alpha_n} = \qf{1,-\alpha_1}\otimes \cdots
  \otimes \qf{1,-\alpha_n}.
\]
For $m$, $n\geq2$, we say that $I^n F$ is \emph{$m$-linked} if every
$m$ bilinear $n$-fold Pfister forms over $F$ share a common
$(n-1)$-fold factor. If $\charac(F)\neq2$, quadratic forms can be
identified with their symmetric bilinear polar forms, and in
particular the $2$-fold Pfister forms are the norm forms of quaternion
algebras, hence $F$ is $m$-linked in the sense discussed above if and
only if $I^2F$ is $m$-linked. Becher raised the following question:

\begin{ques}[{\cite[Question 5.2]{Becher}}]
  \label{Becher}
  Suppose $I^n F$ is $3$-linked for some $n\geq2$. Does it follow that
  $I^n F$ is $m$-linked for every $m \geq 3$?
\end{ques}

This question was answered in the negative for fields $F$ of
$\charac(F)=2$ in \cite{Chapman:2018}. 
In this note, we shall show how Becher's question can be answered also
in the case of $\operatorname{char}(F)=0$ using the main result of
\cite{Chapman:2018} on symmetric bilinear forms over fields of
characteristic~$2$ and the existence of a dyadic valuation on $\C$:

\begin{thmB}
  For $F=\C(x_1,\ldots,x_n)$ with $n\geq2$, $I^nF$ is $3$-linked but
  not $2^n$-linked.
\end{thmB}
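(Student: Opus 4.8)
The plan is to treat the two halves of the statement by quite different means: the three-linkage by the arithmetic of $F=\C(x_1,\dots,x_n)$ itself, and the failure of $2^n$-linkage by a dyadic specialization to characteristic~$2$. The common starting point is that, by the Tsen--Lang theorem, $F$ is a $C_n$-field, so $u(F)\le 2^n$, and $\operatorname{cd}_2(F)=n$; combined with the Milnor conjecture (now a theorem) the latter yields $I^{n+1}F=0$.

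\emph{Three-linkage.} I would first record that $u(F)\le 2^n$ forces any two $n$-fold Pfister forms $\pi_1,\pi_2$ to be linked: the anisotropic part of $\pi_1\perp(-\pi_2)$ lies in $I^nF$ and has dimension at most $u(F)\le 2^n$, so by the Arason--Pfister Hauptsatz it is either $0$ or a scalar multiple of an $n$-fold Pfister form; in both cases $i_W\bigl(\pi_1\perp(-\pi_2)\bigr)\ge 2^{n-1}$, which by the Elman--Lam linkage criterion produces a common $(n-1)$-fold factor. For three forms $\pi_1,\pi_2,\pi_3$ I would look at the class of $\pi_1+\pi_2+\pi_3$ in $I^nF$: since $u(F)\le2^n$ and $I^{n+1}F=0$, every nonzero anisotropic class in $I^nF$ is again a scalar multiple of an $n$-fold Pfister form, so one gets a relation $\pi_1+\pi_2+\pi_3+\pi_4=0$ in $WF$ between four $n$-fold Pfister forms, from which a common $(n-1)$-fold factor of $\pi_1,\pi_2,\pi_3$ is extracted by the chain-equivalence argument of \cite[Cor.~5.3]{ChapmanDolphinLeep}, now run for general $n$.

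\emph{Failure of $2^n$-linkage.} Here the plan is to transport the characteristic-$2$ counterexample of \cite{Chapman:2018}. Extend the dyadic valuation of $\Q$ to $\C$; since $\C$ is algebraically closed this gives a valuation $v$ with divisible value group ($\cong\Q$) and residue field $\overline{\mathbb F_2}$, of characteristic~$2$. Extend $v$ to $F$ by the Gauss valuation in which each $x_i$ is a unit whose residue $\bar x_i$ is algebraically independent over $\overline{\mathbb F_2}$; the residue field becomes $K=\overline{\mathbb F_2}(\bar x_1,\dots,\bar x_n)$, of characteristic~$2$, and the value group is still the divisible group $\Q$. Over $K$, \cite{Chapman:2018} furnishes $2^n$ bilinear $n$-fold Pfister forms sharing no common $(n-1)$-fold factor, and I would take $\pi_1,\dots,\pi_{2^n}$ to be their evident lifts to $F$, given by the same symbols in the units $x_i,\,x_i+1,\dots$ (for $n=2$ these are precisely the four forms underlying Theorem~A). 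The decisive structural point is that, $v(F^\times)$ being $2$-divisible, every $a\in F^\times$ satisfies $v(a)\in 2\,v(F^\times)$, so $a$ differs from a unit by a square; hence every symmetric bilinear form over $F$ is diagonalisable with unit entries. Invoking the reduction (first-residue) homomorphism for symmetric bilinear forms over a valued field --- the characteristic-$2$ analogue of Springer's theorem supplied by the recent characteristic-$2$ results --- one obtains a ring homomorphism $r\colon WF\to WK$ with $r\bigl(\pf{u_1,\dots,u_k}\bigr)=\pf{\bar u_1,\dots,\bar u_k}$ for units $u_j$. Granting $r$, if the $\pi_i$ admitted a common $(n-1)$-fold factor $\rho$ over $F$, normalising $\rho$ to have unit entries and applying $r$ to the isometries $\pi_i\cong\rho\otimes\pf{c_i}$ would display $r(\rho)$ as a common $(n-1)$-fold factor of the reductions over $K$, contradicting \cite{Chapman:2018}; as $F$ embeds in its henselisation, where the reduction theorem is cleanest, it suffices to argue there.

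I expect the main obstacle to be exactly the construction and multiplicativity of $r$ in residue characteristic~$2$. Unlike the classical ($\charac\neq2$) second residue, one must guarantee that a unit bilinear form which becomes metabolic over $F$ already has metabolic residue over $K$, i.e.\ that the wild ramification neither creates nor destroys hyperbolicity; this is where the $2$-divisibility of $v(F^\times)$ (forcing all forms to be unit forms) and the recent structural results on symmetric bilinear forms in characteristic~$2$ are indispensable, and where the bulk of the work lies. By comparison, the three-linkage half is a comparatively direct consequence of $u(F)\le 2^n$ together with $I^{n+1}F=0$.
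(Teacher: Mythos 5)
Your overall strategy---Tsen--Lang for the positive half, dyadic specialization to the characteristic-$2$ example of \cite{Chapman:2018} for the negative half---is the same as the paper's, but the negative half, which is the substance of the theorem, rests on a tool you do not construct and which is not available off the shelf: a residue homomorphism $r\colon WF\to W\overline{F}$ with $r(\pf{u_1,\dots,u_k})=\pf{\overline{u_1},\dots,\overline{u_k}}$ for a valuation of residue characteristic $2$. You correctly flag this as ``where the bulk of the work lies,'' but that is exactly the problem: \cite{Chapman:2018} contains no such specialization statement (it is a theorem purely about fields of characteristic $2$, with no valuations involved), the classical first and second residue homomorphisms of Springer require residue characteristic different from $2$, and in the dyadic case the naive map on diagonal unit forms is not known to be well defined on Witt classes---wild ramification is precisely the obstruction, and $2$-divisibility of the value group does not remove it. So the proof of non-$2^n$-linkage is incomplete at its only nontrivial step.

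The paper circumvents the residue homomorphism entirely. A common factor $\pf{\alpha}$ of the forms $\varphi_{\bold d}$ is equivalent, via \cite[Lemma~6.11]{EKM}, to the statement that the pure parts $\varphi'_{\bold d}$ all represent a common element, i.e., that an explicit system of homogeneous polynomial equations has a nontrivial solution. Polynomial identities, unlike Witt classes, specialize without difficulty: one scales the solution so that all coordinates are integral and at least one is a unit, then reduces modulo the valuation ideal. Anisotropy of the residue forms (part of \cite[Th.~3.3]{Chapman:2018}) guarantees that the common represented value stays nonzero, yielding a common factor $\pf{\beta}$ over $\overline{F}=\overline{k}(\overline{x_1},\dots,\overline{x_n})$ and contradicting the characteristic-$2$ theorem. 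This is the replacement for your map $r$, and it is where your write-up needs to change. Two smaller points: the linkage criterion actually used (\cite[Cor.~5.4]{Becher}) is phrased in terms of $u\bigl(F(t)\bigr)=2^{n+1}$ rather than $u(F)\le 2^n$, and your reduction of triple linkage to ``a four-term Pfister relation plus chain equivalence for general $n$'' is essentially the content of Becher's triple-linkage theorem, not a routine extension of \cite[Cor.~5.3]{ChapmanDolphinLeep}; the clean route is simply to cite Becher. Note also that the paper proves the stronger fact that the $2^n$ forms share no common $1$-fold factor, which in particular rules out a common $(n-1)$-fold factor.
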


\section*{Proofs}

\begin{notation}
  \label{not:1}
  For a given integer $n\geq2$, let $\bold{2}^n=\{0,1\}^{\times n}$,
  and write 
  $\bold{0}=(0,\dots,0)\in \bold{2}^n$. Given a sequence $\alpha_1$,
  \ldots, 
  $\alpha_n$ in the multiplicative group of a field $F$ and
  $\bold{d}=(d_1,\ldots,d_n)\in \bold{2}^n$, let $\alpha^{\bold 
    d}=\prod_{i=1}^n\alpha^{d_i}\in F^\times$. If
  $\bold{d}\neq\bold{0}$, let
  \[
    \varphi_{\bold d} = \pf{\alpha_1, \ldots, \widehat{\alpha_\ell},
      \ldots, \alpha_n}\otimes\pf{1+\alpha^{\bold d}},
  \]
  where $\ell$ is the minimal index in $\{1, \ldots, n\}$ for which
  $d_\ell\neq0$, and let
  \[
    \varphi_{\bold0} = \pf{\alpha_1, \ldots, \alpha_n}.
  \]
\end{notation}

The following result is from~\cite[Th.~3.3]{Chapman:2018}:

\begin{prop}
  \label{prop:char2}
  Suppose $\charac(F)=2$ and $\alpha_1$, \ldots, $\alpha_n$ are
  $2$-independent in $F$, which means that
  $(\alpha^{\bold d})_{\bold{d}\in \bold{2}^n}$ is a linearly
  independent family in $F$ viewed as an
  $F^2$-vector space. Then the forms $\varphi_{\bold d}$ for
  $\bold{d}\in \bold{2}^n$ are anisotropic and have no common $1$-fold
  factor.
\end{prop}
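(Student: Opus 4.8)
The plan is to treat the two assertions separately: anisotropy by a direct degree computation, and the absence of a common factor by an induction on $n$ driven by residues of the coordinate valuations.

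For anisotropy I would use the standard criterion that a bilinear $n$-fold Pfister form is anisotropic precisely when its slots are $2$-independent, i.e.\ generate a subfield of degree $2^n$ over $F^2$. For $\bold{d}=\bold{0}$ this is the hypothesis. For $\bold{d}\neq\bold{0}$ the slots are the $\alpha_i$ with $i\neq\ell$ together with $1+\alpha^{\bold d}$, and the point is that they generate the same field $K:=F^2(\alpha_1,\dots,\alpha_n)$ over $F^2$: indeed $\alpha^{\bold d}=(1+\alpha^{\bold d})+1$ lies in the field they generate, and since $\ell$ is the least index in the support of $\bold{d}$ we have $\alpha^{\bold d}=\alpha_\ell\prod_{i>\ell,\,d_i=1}\alpha_i$ with the product running over indices $\neq\ell$, so $\alpha_\ell$ is recovered as well. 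Thus the $n$ slots generate the degree-$2^n$ field $K$, hence are $2$-independent, so $\varphi_{\bold d}$ is anisotropic.

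For the absence of a common $1$-fold factor the first thing to notice is that the crude invariants are useless: the recovery computation above shows that \emph{all} the $\varphi_{\bold d}$ have the same value field $K$, so represented values carry no information, and the invariant $I^nF/I^{n+1}F\to\Omega^n_F$ sends every $\varphi_{\bold d}$ to an $F$-multiple of $\tfrac{d\alpha_1}{\alpha_1}\wedge\cdots\wedge\tfrac{d\alpha_n}{\alpha_n}$, which is too coarse to detect divisibility. Instead I would argue by induction on $n$ using the second residue map $\partial$ of the $\alpha_n$-adic valuation, whose residue field is generated by $\alpha_1,\dots,\alpha_{n-1}$. Writing $\bold{d}=(\bold{d}',d_n)$, a direct computation on the slots yields the clean inductive picture: $\partial\varphi_{(\bold{d}',0)}$ is exactly the form $\varphi_{\bold{d}'}$ of the analogous $(n-1)$-variable family, whereas $\partial\varphi_{(\bold{d}',1)}=0$ (the extra slot $1+\alpha^{\bold d}$ reduces to $1$, killing the residue).

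Now suppose $\pf\gamma$ is a common factor; necessarily $\gamma\notin F^{\times2}$. If $\gamma$ has even value at the $\alpha_n$-adic valuation I may take it to be a unit, with residue $\bar\gamma$. This residue cannot be a square, for otherwise $\pf{\bar\gamma}=0$, and writing $\varphi_{(\bold{d}',0)}=\pf\gamma\otimes\rho$ with $\pf\gamma$ unramified gives $\partial\varphi_{(\bold{d}',0)}=\pf{\bar\gamma}\otimes\partial\rho=0$, contradicting $\partial\varphi_{(\bold{d}',0)}=\varphi_{\bold{d}'}\neq0$. Hence $\bar\gamma$ is a nonsquare, and the same identity $\varphi_{\bold{d}'}=\pf{\bar\gamma}\otimes\partial\rho$ exhibits $\pf{\bar\gamma}$ as a common factor of the $(n-1)$-variable family, contradicting the induction hypothesis; the base case $n=1$ is the observation that $\pf{\alpha_1}$ and $\pf{1+\alpha_1}$ are not isometric because $\alpha_1(1+\alpha_1)\notin F^{\times2}$. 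The hard part will be the remaining case, where $\gamma$ has odd value at the $\alpha_n$-adic valuation, so that $\pf\gamma$ is itself ramified: there the residue identities turn out to be consistent with a common factor and yield no contradiction directly. I expect to resolve this by exploiting the symmetry of the construction, since the very same residue computation is available at every $\alpha_j$-adic valuation, reducing matters to finding one index $j$ at which $\gamma$ has even value; a purported common factor ramified at all of $\alpha_1,\dots,\alpha_n$ simultaneously should then be excluded by a further valuation-theoretic argument or by a change of variables normalizing the square class of $\gamma$. Dispatching this degenerate case, together with phrasing the argument for an arbitrary field carrying $2$-independent $\alpha_1,\dots,\alpha_n$ rather than only for the rational function field to which the general case can be specialized, is the technical heart of the proof.
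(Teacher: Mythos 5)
A preliminary remark: the paper does not prove this proposition at all --- it imports it verbatim from \cite[Th.~3.3]{Chapman:2018} --- so your proposal is an independent proof attempt rather than a reconstruction of an argument in the text. Your first half is correct and complete: the diagonal part of a bilinear Pfister form $\pf{\beta_1,\dots,\beta_n}$ is $\qf{\beta^{\bold e}}_{\bold e\in\bold 2^n}$, so anisotropy is exactly $2$-independence of the slots, and your recovery of $\alpha^{\bold d}$ from $1+\alpha^{\bold d}$ and then of $\alpha_\ell$ (using that every other index in the support of $\bold d$ exceeds $\ell$) shows that the $n$ slots of each $\varphi_{\bold d}$ generate the full degree-$2^n$ field $F^2(\alpha_1,\dots,\alpha_n)$. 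One aside is misleading, though: what governs $1$-fold factors is not $D(\varphi_{\bold d})$ but the value set of the \emph{pure part} $D(\varphi'_{\bold d})$ --- this is precisely the criterion from \cite[Lemma~6.11]{EKM} that the paper invokes in the proof of its main proposition --- and these sets are not all equal to your $K\setminus\{0\}$; computing them directly is a viable, purely field-theoretic route to the second assertion that works over an arbitrary $F$.

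The second half has genuine gaps, both of which you flag yourself but neither of which is a routine technicality. First, the whole mechanism of second residues at ``the $\alpha_j$-adic valuation'' presupposes $F=k(\alpha_1,\dots,\alpha_n)$ with the $\alpha_i$ algebraically independent; the proposition concerns an arbitrary field of characteristic $2$ carrying a $2$-independent family, where no such valuations exist, and your suggested reduction runs backwards: a common $1$-fold factor over the big field $F$ need not descend to the subfield generated by the $\alpha_i$, so proving the statement for rational function fields would not prove it in general. Second, even over $k(\alpha_1,\dots,\alpha_n)$ the case $v_{\alpha_n}(\gamma)$ odd is left entirely open, and the proposed escape --- find an index $j$ with $v_{\alpha_j}(\gamma)$ even --- cannot succeed as stated: the parities $\bigl(v_{\alpha_j}(\gamma)\bmod 2\bigr)_j$ are invariants of the square class of $\gamma$, and classes such as $\gamma=\alpha_1\cdots\alpha_n$ have them all odd, so no normalization removes them. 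Nor do the residue identities you computed exclude such a $\gamma$: for instance, the residue at $v_{\alpha_n}$ of the pure part of $\varphi_{(0,\dots,0,1)}$ contains $\qf{\bar\alpha^{\bold e'},\bar\alpha^{\bold e'}}$ and is therefore isotropic, so its representing elements of odd value is not obstructed. As written, the proof of the ``no common $1$-fold factor'' assertion is incomplete, and the missing cases are exactly where the real content lies.
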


The main result from which Theorems~A and B derive is the following:

\begin{prop}
  \label{prop:main}
  Let $F=k(x_1, \ldots, x_n)$ be the field of rational functions in
  $n$ indeterminates over an arbitrary field $k$ of characteristic
  zero, for some $n\geq2$. Let $\varphi_{\bold d}$ for
  $\bold{d}\in\bold{2}^n$ be the Pfister forms defined as in
  Notation~\ref{not:1} with the sequence $x_1$, \ldots, $x_n$ for
  $\alpha_1$, \ldots, $\alpha_n$. The
  forms $\varphi_{\bold d}$ do not have a common $1$-fold factor.
\end{prop}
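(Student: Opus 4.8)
The plan is to transfer the characteristic-$2$ result of Proposition~\ref{prop:char2} to the characteristic-zero field $F=k(x_1,\ldots,x_n)$ by means of a dyadic valuation and residue (specialization) maps on Witt rings. Since $\charac(k)=0$, I would assume $k$ carries a valuation $v$ with residue field $\bar k$ of characteristic $2$ and value group, say, $\mathbb Z$; the motivating case is $k=\C$ equipped with a dyadic valuation, whose existence is recalled in the introduction. I then extend $v$ to a valuation on $F=k(x_1,\ldots,x_n)$ using the Gauss (monomial) valuation determined by the indeterminates: concretely, I would assign each $x_i$ an independent value so that the extended value group is $\mathbb Z\times\mathbb Z^n$ ordered lexicographically (or some $\mathbb Z$-valued refinement), and arrange that the residue field of the extended valuation is exactly $\bar k(x_1,\ldots,x_n)$ with the images of the $x_i$ being the indeterminates $\bar x_i$. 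The key point of this choice is that the $\bar x_i$ are $2$-independent in $\bar k(x_1,\ldots,x_n)$, so Proposition~\ref{prop:char2} applies to the residue field.

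The main tool is the second residue homomorphism $\partial\colon W F \to W \bar F$ associated with the valuation (here $\bar F$ denotes the residue field), which sends an anisotropic form over $F$ with good reduction to its residue. The crucial compatibility I would establish is that each Pfister form $\varphi_{\bold d}$ over $F$, built from the units $x_i$ and the element $1+x^{\bold d}$, reduces under $\partial$ (or under the first residue / specialization) to the corresponding form $\bar\varphi_{\bold d}$ over $\bar F$ built from the $\bar x_i$ and $1+\bar x^{\bold d}$. I then argue by contrapositive: if the $\varphi_{\bold d}$ had a common $1$-fold factor $\pf\beta$ over $F$, i.e.\ each $\varphi_{\bold d}\cong\pf\beta\otimes\psi_{\bold d}$ for some $(n-1)$-fold Pfister form $\psi_{\bold d}$, then applying the residue map would produce a common $1$-fold factor $\pf{\bar\beta}$ for the $\bar\varphi_{\bold d}$ over $\bar F$, contradicting Proposition~\ref{prop:char2}.

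Concretely, the steps I would carry out are: (1) fix the valuation $v$ on $k$ and its monomial extension to $F$, computing its value group and verifying that the residue field is $\bar k(\bar x_1,\ldots,\bar x_n)$; (2) check that the units $x_1,\ldots,x_n$ and the elements $1+x^{\bold d}$ are all units of the valuation ring, so that each $\varphi_{\bold d}$ is defined over the valuation ring and has good reduction, and that its residue is $\bar\varphi_{\bold d}$; (3) recall that the $\bar x_i$ are $2$-independent over $\bar k(\bar x_1,\ldots,\bar x_n)$ so that Proposition~\ref{prop:char2} gives anisotropy and the absence of a common $1$-fold factor for the $\bar\varphi_{\bold d}$; and (4) run the lifting/specialization argument showing that a common factor upstairs would descend to a common factor downstairs.

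The main obstacle I anticipate is step~(4), the behaviour of Pfister factors under residue. One subtlety is that a common factor $\pf\beta$ over $F$ need not itself have unit value, and the decomposition $\varphi_{\bold d}\cong\pf\beta\otimes\psi_{\bold d}$ involves the interaction of the residue map with tensor products of Pfister forms, which is not automatic in the presence of wild ramification (the residue characteristic is $2$). I would handle this by using the characterisation of common factors in terms of divisibility in $I^nF$ together with the multiplicativity of the residue map on forms with good reduction, being careful to normalise $\beta$ by a suitable value-group representative so that $\pf\beta$ acquires good reduction; the delicate point is ensuring that the residue of a $1$-fold factor is again a genuine (possibly degenerate, in the bilinear sense of characteristic $2$) $1$-fold factor over $\bar F$, which is exactly where the characteristic-$2$ theory of symmetric bilinear forms from~\cite{Chapman:2018} must be invoked rather than the classical quadratic-form residue formalism.
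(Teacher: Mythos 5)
Your overall strategy --- reducing to Proposition~\ref{prop:char2} via a valuation on $F$ that extends a dyadic valuation on $k$ --- is the right one and matches the paper, but there are two problems, one minor and one fatal. The minor one: to obtain the residue field $\overline{k}(\overline{x_1},\ldots,\overline{x_n})$ you must take the Gauss extension with $v(x_i)=0$; assigning the $x_i$ independent \emph{nonzero} values (value group $\mathbb{Z}\times\mathbb{Z}^n$) would force the residue field to be $\overline{k}$ itself and destroy the $2$-independent elements you need. The fatal one is exactly the obstacle you flag in step~(4) and then do not overcome: the first and second residue homomorphisms $WF\to W\overline{F}$ simply do not exist when the residue characteristic is $2$, so there is no map of Witt rings under which a common factor $\pf{\beta}$ upstairs descends to a common factor $\pf{\overline{\beta}}$ downstairs. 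Your proposed remedy (normalising $\beta$ by a value-group representative, multiplicativity of the residue map on forms of good reduction) still presupposes the very homomorphism that is unavailable, and invoking the characteristic-$2$ theory of \cite{Chapman:2018} does not supply it, since that paper works entirely inside a field of characteristic $2$.

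The paper gets around this by never reducing Witt classes at all: it reduces \emph{solutions of equations}. A Pfister form $\varphi_{\bold d}$ has $\pf{\alpha}$ as a factor if and only if its pure part $\varphi'_{\bold d}$ (defined by $\varphi_{\bold d}=\qf{1}\perp\varphi'_{\bold d}$) represents $-\alpha$, by \cite[Lemma~6.11]{EKM}, which is valid for bilinear Pfister forms in every characteristic. A common factor therefore yields a nontrivial solution of the homogeneous system $\varphi'_{\bold d}(\bold{t}_{\bold d},\bold{t}_{\bold d})=\varphi'_{\bold 0}(\bold{t}_{\bold 0},\bold{t}_{\bold 0})$. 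One scales this solution so that the minimum value of its coordinates is $0$ and reduces the \emph{coordinates} modulo the maximal ideal of the valuation ring; since the residue forms $\overline{\varphi}'_{\bold d}$ are anisotropic by Proposition~\ref{prop:char2}, the reduced solution produces a common nonzero represented value $\beta\in\overline{F}^\times$, hence a common factor $\pf{\beta}$ of the $\overline{\varphi}_{\bold d}$, contradicting Proposition~\ref{prop:char2}. This element-level specialization is the idea your proposal is missing; without it, or some genuine substitute for the nonexistent dyadic residue homomorphism, the argument does not close.
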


\begin{proof}
  A theorem of Chevalley (see \cite[Theorem~3.1.1]{EnglerPrestel})
  shows that the $2$-adic valuation on $\Q$ extends to a valuation
  $v_0$ on $k$. Let $\overline k$ be the residue field of this
  valuation, which has characteristic~$2$. The valuation $v_0$ has a
  Gauss extension to a valuation $v$ on $F$ such that $v(x_i)=0$ for
  $i=1$, \ldots, $n$ and $\overline{x_1}$, \ldots, $\overline{x_n}$
  are algebraically 
  independent over $\overline k$; see
  \cite[Cor.~2.2.2]{EnglerPrestel}. The residue field of $v$ is thus 
  $\overline{F}=\overline{k}(\overline{x_1}, \ldots, \overline{x_n})$,
  a field of rational functions in $n$ indeterminates over $\overline
  k$. Since the coefficients of the forms $\{\varphi_{\bold{d}} :
  \bold{d} \in \bold{2}^n\}$ are all of value $0$, they have residue
  forms $\{\overline{\varphi}_{\bold{d}} : \bold{d} \in \bold{2}^n\}$,
  where the coefficients of $\overline{\varphi}_{\bold{d}}$ are the
  residues of the coefficients of $\varphi_{\bold{d}}$. The forms
  $\overline{\varphi}_{\bold d}$ are bilinear Pfister forms as defined
  in Notation~\ref{not:1}, with the $2$-independent sequence
  $\overline{x_1}$, \ldots, $\overline{x_n}$ for $\alpha_1$, \ldots,
  $\alpha_n$. 

  For $\bold{d}\in\bold{2}^n$, let $\bold{t}_{\bold d} = (t_{1,\bold
    d}, \ldots, t_{2^n-1,\bold d})$ be a $(2^n-1)$-tuple of
  indeterminates. Suppose the bilinear forms $\varphi_{\bold d}$ have
  a common factor $\pf{\alpha}$. Then the pure subforms
  $\varphi'_{\bold d}$ defined by the equation $\varphi_{\bold d} =
  \qf 1 \perp \varphi'_{\bold d}$ all represent $-\alpha$, hence the
  system of equations
  \[
    \varphi'_{\bold d}(\bold{t}_{\bold d}, \bold{t}_{\bold d}) =
    -\alpha \qquad\text{for $\bold{d}\in\bold{2}^n$}
  \]
  has a solution. We may therefore find nontrivial solutions to the
  system of equations
  \[
    \varphi'_{\bold d}(\bold{t}_{\bold d}, \bold{t}_{\bold d}) =
    \varphi'_{\bold 0}(\bold{t}_{\bold 0}, \bold{t}_{\bold0})
    \qquad\text{for $\bold{d}\in\bold{2}^n\setminus\{\bold{0}\}$.}
  \]
  Since these equations are homogeneous, upon scaling we may find
  solutions $(\bold{u}_{\bold d})_{\bold{d}\in\bold{2}^n}$ such that
  \[
    \min\{v(u_{i,\bold d})\mid i=1, \ldots, 2^n-1,\;
    \bold{d}\in\bold{2}^n\} = 0.
  \]
  Taking residues, we obtain
  \[
    \overline{\varphi}'_{\bold d}(\overline{\bold{u}_{\bold d}},
    \overline{\bold{u}_{\bold d}}) = \overline{\varphi}'_{\bold 0}
    (\overline{\bold{u}_{\bold0}}, \overline{\bold{u}_{\bold0}})
    \qquad\text{for $\bold{d}\in\bold{2}^n\setminus\{\bold{0}\}$.}
  \]
  Since at least one $\overline{u_{i,\bold d}}$ is nonzero and the
  forms $\overline{\varphi}'_{\bold d}$ are anisotropic, it follows
  that these forms all represent some $\beta\in\overline{F}^\times$,
  hence the forms $\overline{\varphi}_{\bold d}$ have a common factor
  $\pf{\beta}$ by \cite[Lemma~6.11]{EKM}. This yields a contradiction
  to Proposition~\ref{prop:char2}.
\end{proof}

Theorem~A readily follows from Proposition~\ref{prop:main} with $n=2$
and $k=\C$, because the forms $\varphi_{\bold0}$, $\varphi_{(0,1)}$,
$\varphi_{(1,0)}$, and $\varphi_{(1,1)}$ are the norm forms of the
quaternion algebras $(x_1,x_2)$, $(x_1, x_2+1)$, $(x_2,x_1+1)$ and
$(x_2, x_1x_2+1)$ respectively.

\begin{proof}[Proof of Theorem~B]
  The field $F=\C(x_1, \ldots, x_n)$ is a $C_n$-field, hence $F(t)$ is
  a $C_{n+1}$-field, see \cite[Cor.~97.6]{EKM}. In particular,
  $u\bigl(F(t)\bigr)=2^{n+1}$, and it follows
  from~\cite[Cor.~5.4]{Becher} that $I^nF$ is $3$-linked. Apply
  Proposition~\ref{prop:main} with $k=\C$ to obtain a set of $n$-fold
  Pfister forms of cardinality $2^n$ that do not have a common
  $1$-fold factor, hence are not linked.
\end{proof}

\bibliographystyle{plain}
\bibliography{bibfile}

\end{document}